\newcommand{\Zz}{\mathbb{Z}}
\newcommand{\Cc}{\mathbb{C}}
\newcommand{\Pp}{\mathbb{P}}
\newcommand{\la}{\langle}
\newcommand{\ra}{\rangle}
\newcommand{\Oo}{\mathcal{O}}
\def\mod{\mathop{\mathrm{mod}}\nolimits}
\def\Pic{\mathop{\mathrm{Pic}}}
\def\Proj{\mathop{\mathrm{Proj}}}
\def\Tr{\mathop{\mathrm{Tr}}\nolimits}
\newtheorem*{rep@theorem}{\rep@title}
\newcommand{\newreptheorem}[2]{%
\newenvironment{rep#1}[1]{%
 \def\rep@title{#2 \ref{##1}}%
 \begin{rep@theorem}}%
 {\end{rep@theorem}}}
\newtheorem{thm-int}{Theorem}
\theoremstyle{definition}
\newtheorem{Def-s}[Thm]{Definition}
\newtheorem{theorem}{Theorem}[section]
\newtheorem{proposition}[theorem]{Proposition}
\newtheorem{definition}[theorem]{Definition}
\newtheorem{corollary}[theorem]{Corollary}
\newtheorem{remark}[theorem]{Remark}
\numberwithin{equation}{section}
\begin{document}

\title{On mirrors of elliptically fibered K3 surfaces}

\begin{abstract}
We study a two-parameter family of K3 surfaces of (generic) Picard rank $18$
which is mirror to the $18$-dimensional family of elliptically fibered K3 surfaces with a section. Members of this family are given as compactifications
of hypersurfaces in three dimensional algebraic torus given by equations $y^2+z+z^{-1} + x^3 + ax +b =0$
where $a$ and $b$ are the parameters of the family. It follows from previous work of Morrison that these surfaces are double covers of Kummer surfaces coming from products of elliptic curves, and we establish this connection explicitly. This in turn provides a description of the one-parameter families of K3 surfaces which are mirror to polarized K3 surfaces of Picard rank one. 

{\bf It has been brought to our attention that the results of this paper have already appeared in the literature. The appropriate references are added at the end of the introduction.}
\end{abstract}


\author{Lev A. Borisov}
\address{Department of Mathematics\\
Rutgers University\\
Piscataway, NJ 08854} \email{borisov@math.rutgers.edu}

\maketitle

\section{Introduction}
Mirror symmetry for K3 surfaces got its start in the work of Dolgachev and Nikulin (and independently Pinkham). The most authoritative reference in the paper of Dolgachev \cite{Dolgachev}. In its most superficial description,
it associates to some $k$-dimensional families of algebraic K3 surfaces of generic Picard rank $20-k$ some $(20-k)$-dimensional families of $K3$ surfaces 
of generic Picard rank $k$. 

\medskip
More precisely, suppose that $M$ is primitive sublattice of signature $(1,\cdot)$ of  the second integer cohomology lattice of a K3 surface
$$
M\subset H^2({\rm K3},\Zz) = E_8(-1)\oplus E_8(-1) \oplus U\oplus U\oplus U
$$
where $E_8$ is the unique positive definite even unimodular lattice of rank eight and $U$ is the unimodular lattice of signature $(1,1)$ given 
by the pairing matrix
$$
\left(
\begin{array}{cc}
0&1\\1&0
\end{array}
\right)
.
$$
In its simplest form, Dolgachev's mirror to the family of $M$-polarized K3 surfaces is a family of $M^\vee$-polarized K3 surfaces where 
$M^\vee$ is an orthogonal complement to some copy of $U$ in the orthogonal complement $M^\perp$ of $M$ in $ H^2({\rm K3},\Zz)$.

\medskip
The set of algebraic K3 surfaces is a countable union of the $19$-dimensional families marked with $\la 2n\ra$, i.e. K3 surfaces with a choice of an ample (or more generally semi-ample) divisor class $D$ with $D^2 =2n$. Naturally, these are some of the most studied families of K3 surfaces, see \cite{GNS}. So it makes perfect sense to try to better understand their mirrors.
It was shown in \cite{Dolgachev} that the Picard lattices of the (very general) members of these mirror families are isomorphic to
$$
E_8(-1)\oplus E_8(-1) \oplus U \oplus \la -2n \ra
$$
and their moduli spaces are isomorphic to the well-known moduli curves $X_0(n)^+$.
The elements of these families are shown to be Shioda-Inose partners of the products of elliptic curves related by a Fricke involution. Specifically, a generic member of this family 
is birational to a double cover of the Kummer surface $E_1\times E_2$ where $E_1$ and $E_2$ are isogeneous elliptic curves that correspond to the parameters 
$\tau$ and $(-\frac 1{n\tau})$.


\medskip
For each $n$ one can extend a primitive element of $H^2({\rm K3},\Zz)$ of square $2n$ to a sublattice isomorphic to $U$.
It is therefore natural to consider a larger two-dimensional family which is the Dolgachev mirror to the family of $U$-marked K3
surfaces.  The Picard group of a generic element of the mirror is marked with $E_8(-1)\oplus E_8(-1) \oplus U$. These surfaces are Shioda-Inose partners 
of the products of two elliptic curves $E_1\times E_2$ for a generic choice of the unordered pair of elliptic curves.

\medskip
This paper gives a very explicit description of these Shioda-Inose partners. The main result is the following theorem.

\medskip
\medskip
{\bf Theorem \ref{main}.}
For generic choices of elliptic curves $E_1$ and $E_2$ with $J$-invariants $j_1$ and $j_2$ the 
surface $X$ which is the compactification of
the solution space of
$$
y^2 + z+z^{-1} + x^3  -\frac{j_1^{\frac 13}j_2^{\frac 13} }{48}\,x  -\frac{ (j_1-1728)^{\frac 12}(j_2-1728)^{\frac 12}}{864} = 0
$$
and $E_1\times E_2$ form a Shioda-Inose pair. Specifically, the minimal resolution 
of the quotient of $X$ by $\mu:(x,y,z)\mapsto (x,-y,z^{-1})$ 
is isomorphic to the Kummer surface of $E_1\times E_2$.

\medskip
\medskip
In particular, when $E_1$ and $E_2$ are related by the Fricke involution $\tau\mapsto( -\frac 1{n\tau})$, the above equation describes the members of the Dolgachev's one-dimensional families (in $n=1$ case the surface has a node, so one needs to consider its resolution). This reduces the description of these surfaces to the classical problem of modular polynomials, see for example \cite{Silverman}.


\medskip
The paper is organized as follows. In Section \ref{toricsection} we use Batyrev's description of mirror symmetry for hypersurfaces  in toric varieties to construct
families of K3 surfaces $X$ with the desired lattice $E_8(-1)\oplus E_8(-1)\oplus U$. The main idea is that the $U$-marked K3 surfaces can be viewed as resolutions
of singularities of generic degree $12$ surfaces in the weighted projective space $W\Pp(1,1,4,6)$. We also describe a natural Morrison-Nikulin involution $\mu$ on $X$ and the resulting quotient K3 surfaces $\widetilde{X/\mu}$.  In Section \ref{secKummer} we recall the geometry of Kummer surfaces $Y$ associated to the product of elliptic curves, in order to fix our notations. In Section \ref{seciso} we describe an isomorphism between surfaces $Y$ and $\widetilde{X/\mu}$ by identifying generators in
their Picard lattices. However, this isomorphism is not explicit, in the sense that it does not provide the identification of the parameters. This is rectified in Section \ref{secexp}, where we prove out main result. Finally, in Section \ref{secdim1} we apply our construction to the one-parameter families of Dolgachev.

\medskip
{\bf Acknowledgements.} This project grew from informal discussions with Nick Sheridan, who organized a reading group on Homological Mirror Symmetry for K3 surfaces at the Institute for Advanced Study. I have been partially supported by the NSF Grants  DMS-1201466 and DMS-1601907, as well as by the Simons Foundation Fellowship. I also would like to thank the Institute for Advanced Study for its hospitality during the year-long program in Homlogical Mirror Symmetry.

\medskip
{\bf Addendum.} {\bf After the first version of the preprint has appeared, it was brought to our attention that the results are known to the experts.}
Specifically, the results appear in the work of Shioda \cite{ShiodaSandwich}, based on the work of Kuwata and Shioda \cite{KuwataShioda}. They also appear in 
the work of  Clingher and Doran \cite{DoranClingher}. I would like to thank Chuck Doran and Yuya Matsumoto for bringing this to my attention. I would like to also thank Igor Dolgachev for historical comments regarding mirror symmetry for K3 surfaces.

\section{Toric geometry of the mirror family.}\label{toricsection}
In this section we explain how to express an elliptically  K3 surface as an anticanonical hypersurface in the weighted projective space $W\Pp(1,1,4,6)$. We then describe its mirror and observe that it has the expected Picard marking coming from the ambient toric variety.

\medskip
Let $Z$ be a K3 surface with an elliptic fibration and a section. By this we mean a map
$$
\mu: Z\mapsto \Pp^1
$$
with genus one fibers and a section $S\subset Z$, which is a smooth rational curve with $S^2 =-2$. Together with the class $F$ of the fiber, the class of $S$ forms a sublattice $U\subseteq \Pic(Z)$ which is isomorphic to the standard hyperbolic lattice. In other words, we get a $U$-marking on $Z$. It can be shown that a $U$-marking in turn yields an elliptic fibration with section, see \cite{Huybrechts}.

\medskip
Generically, $\Pic(Z)=\Zz^2$ so the fibration $X_U\to \Pp^1$ has irreducible fibers, which we will assume from now on. 
Taking the quotient by the Kummer involution $(-)$ on each fiber gives a fibration 
$$
\pi:Y=Z/(-) \to \Pp^1
$$
whose fibers are isomorphic to $\Pp^1$. The image $S_1$ of $S$ satisfies
$$
S_1^2 = \frac 12 (\pi^*S_1)^2 = \frac 12 (2 S)^2 = -4,
$$
thus the quotient is the Hirzebruch surface $F_4$. 
Then the K3 surface $Z$ is the double cover of $F_4$ ramified at 
twice anticanonical class. In the other direction, 
note that on the Hirzebruch surface $F_4$ there is a fiber $F$ and the section $S_1$ with 
$S_1^2=-4,S_1F=1,F^2=0$. The canonical class is $(-2S-6F)$. The image of the ramification divisor lies in the linear system  $\vert 4S_1+12F\vert $. The linear system  system $\vert 4S_1+12F \vert $
has 
the base locus $S_1$, and generic member is given by $S_1$ plus a section of $3S_1+12F$ which is disjoint from $S_1$. Then 
the double cover is a smooth K3 surface $Z$ with the ramification locus of $Z\to F_4$ given by two components
that correspond to the zero section and the tri-section of nonzero order two points of the fibers. 

\medskip
Let us now realize these surfaces $Z$ as minimal resolutions of hypersurfaces in a toric variety, specifically in a weighted projective space.

\begin{proposition}
A K3 surface $Z$ with an elliptic fibration, a section, and irreducible fibers is the minimal resolution of singularities of 
a hypersurface of degree $12$ in the weighted projective space $W\Pp(1,1,4,6)$.
\end{proposition}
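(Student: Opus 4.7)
The plan is to combine the double-cover description of $Z$ from the preceding discussion with a realization of $F_4$ as the minimal resolution of the weighted projective plane $W\Pp(1,1,4)$, which lets me present $Z$ as a hypersurface in $W\Pp(1,1,4,6)$ by adjoining a Weierstrass-type coordinate.

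First, by that discussion $Z$ is a double cover of $F_4$ branched along $S_1\cup R$ for some smooth $R\in|3S_1+12F|$ disjoint from $S_1$. I would identify $F_4$ with the minimal resolution of $W\Pp(1,1,4)$: the unique singular point $[0:0:1]$, a $\tfrac{1}{4}(1,1)$ cyclic quotient, resolves to a single $(-4)$-curve that I identify with $S_1$, and the ruling $F_4\to\Pp^1$ corresponds to $[u:v:x]\mapsto[u:v]$ (where $u,v,x$ carry weights $1,1,4$). Since $R$ is disjoint from $S_1$, its image $\bar R\subset W\Pp(1,1,4)$ avoids the singular point and is cut out by a weighted homogeneous polynomial
$$
P(u,v,x)\;=\;A\,x^{3}+B(u,v)\,x^{2}+C(u,v)\,x+D(u,v),
$$
in which $A$ is a constant and $B,C,D$ are homogeneous of degrees $4,8,12$ in $(u,v)$.

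Next I would introduce a coordinate $y$ of weight $6$ and consider the degree-$12$ (hence anticanonical) hypersurface
$$
X\;=\;\{\,y^{2}=P(u,v,x)\,\}\;\subset\;W\Pp(1,1,4,6).
$$
Projection onto $(u,v,x)$ presents $X$ as a double cover of $W\Pp(1,1,4)$ branched over $\bar R$ together with the singular point $[0:0:1]$. Completing the cube in $x$ (a valid weighted coordinate change, since $B/(3A)$ has weight $4$) recasts the equation as the Weierstrass form $y^{2}=x^{3}+f_{8}(u,v)\,x+g_{12}(u,v)$ of an elliptic fibration over $\Pp^1$; the degrees $8$ and $12$ of the coefficients are exactly those which make the minimal resolution a K3 surface with irreducible fibers for generic data. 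Matching the branch loci upstairs and downstairs then identifies this minimal resolution with $Z$.

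The main obstacle is verifying that the singularities of $X$ inside the singular ambient space $W\Pp(1,1,4,6)$ are all Du Val. For generic $P$, the hypersurface misses the isolated ambient singularities $[0:0:1:0]$ and $[0:0:0:1]$ (noting in particular that $D(u,v)$ is homogeneous of degree $12$, so vanishes at $u=v=0$) while meeting the ambient $A_1$-singular line $\{u=v=0\}$ transversally in the single point lying over $[0:0:1]\in W\Pp(1,1,4)$. A direct local computation in the chart $x=1$ shows that at that point $X$ is a smooth surface in $\Cc^3$ quotiented by the $\tfrac{1}{2}(1,1)$ action on $(u,v)$, hence acquires an $A_1$ singularity whose resolution produces the $(-2)$-curve $\widetilde S_{1}\subset Z$ double-covering $S_{1}\subset F_{4}$ with ramification (so that $\pi^{*}S_{1}=2\widetilde S_{1}$ and $\widetilde S_{1}^{\,2}=-2$). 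Carrying out this local calculation and matching with the branched cover description of $Z$ completes the proof.
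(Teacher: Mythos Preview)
Your proof follows the same route as the paper's: identify $F_4$ with the minimal resolution of $W\Pp(1,1,4)$, push the branch curve down to a degree~$12$ divisor there, and realize the double cover as the anticanonical hypersurface $y^{2}=P(u,v,x)$ in $W\Pp(1,1,4,6)$. The paper's argument is a three-sentence sketch of exactly this; you have simply filled in the Weierstrass form and the local singularity analysis over $\{u=v=0\}$ that the paper leaves implicit.

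One small point of phrasing: you write ``for generic $P$'' when checking that $X$ avoids $[0:0:1:0]$, but the proposition is about an arbitrary $Z$ with irreducible fibers, not a generic one. In fact $A\neq 0$ is automatic here: since $R\in|3S_{1}+12F|$ is disjoint from $S_{1}$, it is an honest trisection of the ruling, so the cubic-in-$x$ coefficient cannot vanish. With that clarification your argument goes through for every $Z$ in the statement, not just generic ones.
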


\begin{proof}
The Hirzebruch surface $F_4$ is the minimal resolution of singularities of the weighted projective plane $W\Pp(1,1,4)$
and $\Oo(3S_1+12F)$ is the pullback of the $\Oo(12)$ line bundle on $W\Pp(1,1,4)$. So we can think of the K3 surface $Z$ as 
the resolution of singularities of the double cover of $W\Pp(1,1,4)$ ramified at a divisor of degree $12$. This can be naturally viewed as a hypersurface in $W\Pp(1,1,4,6)$ given by a degree $12$ equation.
\end{proof}

\medskip
It has been suggested by Batyrev \cite{Bat.dual} that the mirror to this family of surfaces is obtained by switching 
the roles of two natural reflexive polytopes associated to a toric Fano threefold. This prescription was verified to coincide with Dolgachev's one 
in \cite{Rohsiepe}.

\medskip
Specifically, Batyrev's mirror symmetry produces a mirror family to K3 surfaces with elliptic fibration and a section whose general members are
compactifications $X$ of the hypersurface  $X_0\subset (\Cc^*)^3$ whose Newton polytope $\Delta$ is the simplex with vertices
\begin{equation}\label{vert}
v_1=(-1,-4,-6),~v_2=(1,0,0),~v_3=(0,1,0),~v_4=(0,0,1).
\end{equation}
Indeed, the relation $v_1 + v_2 + 4v_3 +6v_4=0$ comes from the weights of the  weighted projective space $W\Pp(1,1,4,6)$ and determines the above 
vertices uniquely up to isomorphism of $\Zz^3$.

\begin{remark}
If we denote the monomials that correspond to 
$$(0,1,1),~ (0,1,2),~ (-1,-2,-3),
$$
by $x$, $y$ and $z$ respectively, then up to an invertible monomial the equation of the hypersurface $X_0$ is
\begin{equation}\label{c-s}
0 = c_1 z + c_2 z^{-1} + c_3 + c_4 x + c_5 x^2 + c_6 x^3 + c_7 y + c_8 y^2 + c_9 xy
\end{equation}
with $z$, $z^{-1}$, $x^3$ and $y^2$ corresponding to the vertices of $\Delta$ in the order of \eqref{vert}.
By using $(\Cc^*)^3$ action on the coordinates $x,y,z$ as well as other symmetries of the toric variety, such as
$(x,y,z)\mapsto (\alpha_1x+\alpha_2,y+\alpha_3x + \alpha_4,z)$ one can reduce 
this equation to a very simple form
\begin{equation}\label{c-s-reduced}
0 = z+z^{-1} + y^2 + x^3 +ax+b.
\end{equation}
\end{remark}

This prompts the following definition.
\begin{definition}
We call the two parameter family of K3 surfaces which are compactifications $X=X(a,b)$ of the solution set 
of \eqref{c-s-reduced}  a mirror to the family of elliptically fibered K3 surfaces with a section. 
\end{definition}

In what follows, we will be studying these surfaces $X$ in great detail. We first need to realize them as hypersurfaces in 
nef-Fano toric varieties by resolving the ambient toric variety. This will allow us to find $19$ smooth rational curves inside each $X$ in a specific configuration.

\medskip
There is a natural compactification $X_{singular}$ of the set of solutions to \eqref{c-s} which is given by the closure inside the singular projective toric threefold $\Proj (\oplus_{k\geq 0} \Cc (k\Delta \cap \Zz^3))$. The fan of this 
toric threefold is given by the spans of the proper subsets of the set of vertices of  the simplex  $\Delta^\vee$ 
which is the convex hull of the four vertices 
\begin{equation}\label{list}
(-1,-1,-1),~(11,-1-1),~(-1,2,-1),~(-1,-1,1).
\end{equation}
The fan of the resolution of the ambient toric variety is given by a triangulation of the boundary of $\Delta^\vee$. 

\medskip
If we assume that $(a,b)$ are generic so that $X_{singular}$ is $\Delta$-regular in the sense of \cite{Bat.dual}, then the singularities of $X_{singular}$ are simply inherited from that of the ambient variety.
In order for
the closure of the proper preimage of $X_{singular}$ to be smooth, we only need to ensure that this triangulation involves all lattice points on the edges of $\Delta^\vee$. In other words, we allow the ambient variety to be singular at zero-dimensional torus orbits, since a $\Delta$-regular hypersurface is disjoint from these orbits. There are many such choices, and some of them make the ambient variety smooth, but they all give the same minimal resolution $X\to X_{singular}$.
 
\medskip
Let us now study the geometry of $X$.
Each facet of $\Delta$ gives a toric subvariety of $\Proj (\oplus_{k\geq 0} \Cc (k\Delta \cap \Zz^3))$ and we would like to understand the corresponding closed subsets in $X_{singular}$, as well as the singularities of $X_{singular}$ along the generic points of these strata.

\medskip 
The codimension one subvarieties of $X_{singular}$ are given as follows. We have  genus zero curves which are hypersurfaces in toric surfaces
$$
l_1 \subset \Proj (\oplus_{k\geq 0} \Cc (k\,{\rm Conv}(v_2,v_3,v_4) \cap \Zz^3)),
$$
and
$$
l_2 \subset \Proj (\oplus_{k\geq 0} \Cc (k\,{\rm Conv}(v_1,v_3,v_4) \cap \Zz^3))
$$
where the genus can be computed by looking at the number of interior lattice points of the Newton polygon \cite{Khovanskii}.
We also have 
$$
l_3 \subset \Proj (\oplus_{k\geq 0} \Cc (k\,{\rm Conv}(v_1,v_2,v_4) \cap \Zz^3))
$$
of genus one and 
$$
l_4 \subset \Proj (\oplus_{k\geq 0} \Cc (k\,{\rm Conv}(v_1,v_2,v_3) \cap \Zz^3))
$$
of genus two.

\medskip 
The intersections of the curves $l_i$ correspond to toric strata of dimension two in  $\Proj (\oplus_{k\geq 0} \Cc (k\Delta \cap \Zz^3))$. These give singular points on $X_{singular}$ precisely when the lattice length of the corresponding segment in $\Delta^\vee$ is larger than one. The number of points in the stratum is governed by 
the lattice length of the segment in $\Delta$. Specifically, we conclude that $X_{singular}$ 
has one singular point of type $A_{11}$ at $l_1\cap l_2$. It also has $A_2$ singularities
at $l_1\cap l_3$ and $l_2\cap l_3$. Finally, it has $A_{1}$ singularities at
$l_1\cap l_4$ and $l_2\cap l_4$. In all of these cases, the intersection consists of a single point. Note that 
$l_3\cap l_4$ consists of two points, but $X_{singular}$ is smooth there.

\medskip
Therefore, the minimal resolution $X$ of $X_{singular}$ has the following tree of smooth rational curves in it,
of self-intersection $(-2)$ each,  marked as $\bullet$ in the incidence graph below.
\begin{equation}\label{graph}
\begin{array} {cccl}
\bullet
& 
\stackrel{\line(1,0){15}}{}
\bullet 
\stackrel{\line(1,0){15}}{}
\bullet
 \stackrel{\line(1,0){15}}{}
 \bullet \stackrel{\line(1,0){15}}{}
 \bullet \stackrel{\line(1,0){15}}{}
&\bullet
& \stackrel{\line(1,0){15}}{}
 \bullet \stackrel{\line(1,0){15}}{}
 \bullet \\
\vert
&&\vert&\\
\bullet&&\bullet&\\
\vert&&&\\  
\bullet
& 
\stackrel{\line(1,0){15}}{}
\bullet 
\stackrel{\line(1,0){15}}{}
\bullet
 \stackrel{\line(1,0){15}}{}
 \bullet \stackrel{\line(1,0){15}}{}
 \bullet \stackrel{\line(1,0){15}}{}
&\bullet
& \stackrel{\line(1,0){15}}{}
 \bullet \stackrel{\line(1,0){15}}{}
 \bullet \\
&&\vert&\\
&&\bullet&\\
\end{array}
\end{equation}

Here the trivalent nodes correspond to the proper preimages of $l_1$ and $l_2$, and all the other nodes are the exceptional curves of the crepant resolution of one singularity of type $A_{11}$ and two each of type $A_2$ and $A_1$.

\medskip
We can now calculate the sublattice in the Picard group of $X$ generated by the above configuration of rational curves.
\begin{proposition}
The sublattice of $\Pic(X)$ generated by the rational curves from \eqref{graph}
is isomorphic to the unimodular lattice of rank $18$ given by $E_8(-1) \oplus E_8(-1) \oplus U$ which is 
embedded primitively into $\Pic(X)$.
Surface $X$ has a natural structure of elliptic fibration with a section and two fibers of type ${\rm II^*}$ in Kodaira's classification.
\end{proposition}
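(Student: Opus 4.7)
The plan is to exhibit an elliptic fibration on $X$ with a section and two fibers of type $\mathrm{II}^*$, and to recognize the $19$ rational curves in \eqref{graph} as the section together with the components of these two fibers. Everything then reduces to the standard lattice associated to a Jacobian elliptic K3 with two $\mathrm{II}^*$ fibers.

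First I would exhibit the fibration $\pi : X \to \Pp^1$ as the projection $(x,y,z) \mapsto z$. After $x \mapsto -x$, equation \eqref{c-s-reduced} becomes the Weierstrass form $y^2 = X^3 + aX - b - z - z^{-1}$, whose point at infinity supplies a section $S$. Rescaling $X \mapsto z^{-2} X$, $y \mapsto z^{-3} y$ at $z = 0$ yields the minimal Weierstrass model $y^2 = X^3 + az^4 X - z^5(1 + bz + z^2)$ with $(\ord_0 A, \ord_0 B, \ord_0 \Delta) = (4,5,10)$, so Tate's algorithm produces Kodaira type $\mathrm{II}^*$. The analogous computation at $z = \infty$ (using $v = z^{-1}$) gives a second $\mathrm{II}^*$ fiber. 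The remaining zeros of $\Delta = -16\bigl(4a^3 + 27(b + z + z^{-1})^2\bigr)$ account for four nodal $I_1$ fibers, and the Euler numbers balance as $2 \cdot 10 + 4 \cdot 1 = 24$.

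Next I would match the curves in \eqref{graph} to this fibration. The only trivalent vertex of $\tilde E_8$ is its central one, so $l_1$ and $l_2$ must be the central components of the two $\mathrm{II}^*$ fibers. The three arms from the central vertex of $\tilde E_8$ have lengths $1$, $2$, $5$: the pendant branch drawn below each $l_i$ and the length-$2$ branch to the right match the arms of lengths $1$ and $2$, while the two length-$5$ arms occupy the first and last five vertices of the $11$-node chain between $l_1$ and $l_2$. The unique middle vertex of this chain must then be the section $S$, since it has degree $2$ in \eqref{graph} with its two neighbors the identity components (the endpoints of the length-$5$ arms), and $S$ is the only candidate $(-2)$-curve meeting exactly one component of each fiber with multiplicity one.

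Given this identification, the lattice computation is routine. Taking $\{S, F, C_1^{(k)}, \ldots, C_8^{(k)}\}_{k=1,2}$ as a $\Z$-basis, where $F$ is the common fiber class and $C_1^{(k)}, \ldots, C_8^{(k)}$ are the eight non-identity components of the $k$-th fiber, block-orthogonality ($S$ meets only identity components and different fibers are disjoint) gives the decomposition
$$ \begin{pmatrix} -2 & 1 \\ 1 & 0 \end{pmatrix} \oplus \bigl(-\mathrm{Cartan}(E_8)\bigr) \oplus \bigl(-\mathrm{Cartan}(E_8)\bigr) \;\cong\; U \oplus E_8(-1) \oplus E_8(-1). $$
Primitivity of the embedding is then automatic because this lattice is unimodular of rank $18$, so any same-rank overlattice in $\Pic(X)$ must coincide with it. I expect the one delicate point to be the combinatorial matching — in particular, recognizing that the middle vertex of the long $A_{11}$-chain is the section $S$ rather than a fiber component, and that the two halves of that chain realize the length-$5$ arms of the two $\tilde E_8$ diagrams.
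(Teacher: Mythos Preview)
Your argument is correct and reaches the same conclusion as the paper, but the route is genuinely different in emphasis. The paper proceeds purely lattice-theoretically from the toric data: it simply writes down $S$ (the middle vertex of the $A_{11}$ chain) and an explicit class $F$ (the bottom $\tilde E_8$ configuration with multiplicities $1,2,3,4,5,6,4,2,3$), checks that $\langle S,F\rangle\cong U$ is orthogonal to the two $E_8$'s formed by the sides of the diagram, and then verifies that the top $\tilde E_8$ configuration is linearly equivalent to $F$ by observing that their difference is the divisor of the monomial corresponding to $(1,2,3)$ (equivalently, by intersecting with all $19$ curves and using that the generic Picard rank is $18$). No Weierstrass model or Tate's algorithm is invoked. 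Your approach instead starts from the equation, puts the $z$-projection into minimal Weierstrass form, reads off the ${\rm II^*}$ fibers via $(\ord A,\ord B,\ord\Delta)=(4,5,10)$, and only then matches the graph combinatorially. What your approach buys is an independent confirmation of the fiber types and an Euler-number check; what the paper's approach buys is that linear equivalence of the two $\tilde E_8$ configurations is made completely explicit via a toric monomial, and nothing beyond the intersection graph is needed. The one step you flag as delicate --- identifying the middle vertex of the $A_{11}$ chain as the section --- is exactly what the paper does as well, so there is no disagreement there.
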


\begin{proof}
Specifically, the two copies of $E_8$ are generated by the two sides of the diagram, and the lattice $U$ perpendicular to them is generated by $S$ and $F$ given respectively by 
$$
\begin{array} {cccl}
\bullet
& 
\stackrel{\line(1,0){15}}{}
\bullet 
\stackrel{\line(1,0){15}}{}
\bullet
 \stackrel{\line(1,0){15}}{}
 \bullet \stackrel{\line(1,0){15}}{}
 \bullet \stackrel{\line(1,0){15}}{}
&\bullet
& \stackrel{\line(1,0){15}}{}
 \bullet \stackrel{\line(1,0){15}}{}
 \bullet \\
\vert
&&\vert&\\
1&&\bullet&\\
\vert&&&\\  
\bullet
& 
\stackrel{\line(1,0){15}}{}
\bullet 
\stackrel{\line(1,0){15}}{}
\bullet
 \stackrel{\line(1,0){15}}{}
 \bullet \stackrel{\line(1,0){15}}{}
 \bullet \stackrel{\line(1,0){15}}{}
&\bullet
& \stackrel{\line(1,0){15}}{}
 \bullet \stackrel{\line(1,0){15}}{}
 \bullet \\
&&\vert&\\
&&\bullet&\\
\end{array}
$$
$$
\begin{array} {cccl}
\bullet
& 
\stackrel{\line(1,0){15}}{}
\bullet 
\stackrel{\line(1,0){15}}{}
\bullet
 \stackrel{\line(1,0){15}}{}
 \bullet \stackrel{\line(1,0){15}}{}
 \bullet \stackrel{\line(1,0){15}}{}
&\bullet
& \stackrel{\line(1,0){15}}{}
 \bullet \stackrel{\line(1,0){15}}{}
 \bullet \\
\vert
&&\vert&\\
\bullet&&\bullet&\\
\vert&&&\\  
1
& 
\stackrel{\line(1,0){15}}{}
2
\stackrel{\line(1,0){15}}{}
3
 \stackrel{\line(1,0){15}}{}
4\stackrel{\line(1,0){15}}{}
 5 \stackrel{\line(1,0){15}}{}
&6
& \stackrel{\line(1,0){15}}{}
 4\stackrel{\line(1,0){15}}{}
2 \\
&&\vert&\\
&&3&\\
\end{array}
$$
where $\bullet$ indicates coefficient $0$.
The embedding is primitive by unimodularity.

\medskip
We see that the second divisor above provides a fiber of elliptic fibration and the first provides the section.
Finally we observe that
$$
\begin{array} {cccl}
1
& 
\stackrel{\line(1,0){15}}{}
2
\stackrel{\line(1,0){15}}{}
3
 \stackrel{\line(1,0){15}}{}
4 \stackrel{\line(1,0){15}}{}
5 \stackrel{\line(1,0){15}}{}
&6
& \stackrel{\line(1,0){15}}{}
4 \stackrel{\line(1,0){15}}{}
2 \\
\vert
&&\vert&\\
\bullet&&3&\\
\vert&&&\\  
\bullet
& 
\stackrel{\line(1,0){15}}{}
\bullet 
\stackrel{\line(1,0){15}}{}
\bullet
 \stackrel{\line(1,0){15}}{}
 \bullet \stackrel{\line(1,0){15}}{}
 \bullet \stackrel{\line(1,0){15}}{}
&\bullet
& \stackrel{\line(1,0){15}}{}
 \bullet \stackrel{\line(1,0){15}}{}
 \bullet \\
&&\vert&\\
&&\bullet&\\
\end{array}
$$
provides another section of $F$. One way to see
it by noticing that the difference is the divisor of the monomial which corresponds to the lattice point $(1, 2, 3)$.
Alternatively, one can argue that since the family is two-dimensional, the very general member has Picard rank $18$, so it suffices to check that the difference intersects all of the curves from \eqref{graph} trivially. 
\end{proof}

\begin{remark}
It is clear that the fibers of this elliptic fibration are given by specifying the value 
of $z$ in \eqref{c-s} or \eqref{c-s-reduced}. Indeed, the rational function that corresponds to $(1,2,3)$ is precisely 
$z$, and this is not affected by the automorphisms that are used to reduce \eqref{c-s} to \eqref{c-s-reduced}.
\end{remark}

We will now describe a natural Morrison-Nikulin involution on $X$ (called Nikulin involution in \cite{Morrison}). This involution is supposed to have $8$ isolated fixed points so that the resolution of the quotient is again a K3 surface. By \cite{Morrison}
such involution should switch the two copies of $E_8$ and act trivially on its orthogonal complement $U$.
We recall the equation  \eqref{c-s-reduced} that $X=X(a,b)$ is the K3 compactification of $y^2+z+z^{-1}+x^3+ax+b=0$ and construct one such involution explicitly by 
$$
\mu:(x,y,z) \mapsto (x, -y, z^{-1})
$$
on $X_{singular}$ which then gives an involution on $X$.
We can also view this involution as a restriction to $X$ of an automorphism of the ambient toric variety.

\medskip
Note that this involution $\mu$ acts on the base of the elliptic fibration with two fixed points $z=\pm 1$. At these two
points, the action on the fiber is that of Kummer involution $(x,y)\mapsto (x,-y)$ for $0 = y^2 + x^3+ ax + b \pm 2$.
Thus, the involution has eight fixed points  at $y=0, z=\pm 1$ so the quotient $X/\mu$ has a crepant resolution which is itself a K3 surface. Let us denote this resolution by $\widetilde{X/\mu}$ and observe that it has the following tree of smooth rational curves in it. We get the following tree of rational curves on $\widetilde{X/\mu}$.
\begin{equation}\label{graph-mu}
\begin{array} {rcccl}
&\circ&&&\\
&\vert&&&\\
\circ \stackrel{\line(1,0){15}}{}&\bullet& \stackrel{\line(1,0){15}}{}\circ\hskip 130pt&&\\
&\vert&&&\\
 &\circ&&&\\
 &\vert&&&\\
&\bullet&
 \stackrel{\line(1,0){15}}{}
 \bullet \stackrel{\line(1,0){15}}{}
 \bullet \stackrel{\line(1,0){15}}{}
 \bullet \stackrel{\line(1,0){15}}{}
 \bullet \stackrel{\line(1,0){15}}{}
 \bullet \stackrel{\line(1,0){15}}{}
 &\bullet&
\stackrel{\line(1,0){15}}{}
 \bullet \stackrel{\line(1,0){15}}{} \bullet  
 \\
 &\vert&&\vert &\\
&\circ&&\bullet&\\
&\vert&&&\\
\circ \stackrel{\line(1,0){15}}{}&\bullet& \stackrel{\line(1,0){15}}{}\circ\hskip 130pt
&&
\\
&\vert&&&\\
&\circ&&&
\end{array}
\end{equation}
Geometrically the diagram \eqref{graph-mu} corresponds to an elliptic fibration with a section,  one fiber of type ${\rm II^*}$ and 
two fibers of type ${\rm I_0^*}$. The $\circ$ symbols indicate eight disjoint curves on $\widehat{X/\mu}$ 
which are the exceptional curves of $\widehat{X/\mu}\to {X/\mu}$. The 
double cover of $\widehat{X/\mu}$ ramified at these curves is the blowup of $X$ at the eight fixed points of $\mu$.

\begin{remark}\label{l3l4}
The classes of $l_3$ and $l_4$  and their images in $\widehat{X/\mu}$ can be computed, and we will need 
this description later.
The $(0,-1,-1)$ linear function has divisor $-l_3$ plus
$$
\begin{array} {cccl}
2
& 
\stackrel{\line(1,0){15}}{}
2
\stackrel{\line(1,0){15}}{}
2
 \stackrel{\line(1,0){15}}{}
2 \stackrel{\line(1,0){15}}{}
2 \stackrel{\line(1,0){15}}{}
&2
& \stackrel{\line(1,0){15}}{}
1 \stackrel{\line(1,0){15}}{}
0 \\
\vert
&&\vert&\\
2&&1&\\
\vert&&&\\  
2
& 
\stackrel{\line(1,0){15}}{}
2 
\stackrel{\line(1,0){15}}{}
2
 \stackrel{\line(1,0){15}}{}
2 \stackrel{\line(1,0){15}}{}
2 \stackrel{\line(1,0){15}}{}
&2
& \stackrel{\line(1,0){15}}{}
 1 \stackrel{\line(1,0){15}}{}
 0 \\
&&\vert&\\
&&1&\\
\end{array}
$$
so the divisor of $l_3$ is the above one. The fixed points of $\mu$ do not lie in $l_3$
but two of them do lie in the above tree of rational curves, namely at the zero section $S$ of 
the fibration.
Therefore the image on $\widehat{X/\mu}$ is given by the following divisor.
$$
\begin{array} {rcccl}
&0&&&\\
&\vert&&&\\
0 \stackrel{\line(1,0){15}}{}& 0 & \stackrel{\line(1,0){15}}{} 0 \hskip 130pt&&\\
&\vert&&&\\
 & 1 &&&\\
 &\vert&&&\\
& 2&
 \stackrel{\line(1,0){15}}{}
 2 \stackrel{\line(1,0){15}}{}
 2 \stackrel{\line(1,0){15}}{}
 2 \stackrel{\line(1,0){15}}{}
 2 \stackrel{\line(1,0){15}}{}
 2 \stackrel{\line(1,0){15}}{}
 &2&
\stackrel{\line(1,0){15}}{}
 1 \stackrel{\line(1,0){15}}{} 0
 \\
 &\vert&&\vert &\\
&1&&1&\\
&\vert&&&\\
0\stackrel{\line(1,0){15}}{}&0& \stackrel{\line(1,0){15}}{}0\hskip 130pt
&&
\\
&\vert&&&\\
&0&&&
\end{array}
$$
Similarly, the monomial $(0,-1,-2)$ gives $-l_4$ plus
$$
\begin{array} {cccl}
3
& 
\stackrel{\line(1,0){15}}{}
3
\stackrel{\line(1,0){15}}{}
3
 \stackrel{\line(1,0){15}}{}
3 \stackrel{\line(1,0){15}}{}
3 \stackrel{\line(1,0){15}}{}
&3
& \stackrel{\line(1,0){15}}{}
2 \stackrel{\line(1,0){15}}{}
1 \\
\vert
&&\vert&\\
3&&1&\\
\vert&&&\\  
3
& 
\stackrel{\line(1,0){15}}{}
3
\stackrel{\line(1,0){15}}{}
3
 \stackrel{\line(1,0){15}}{}
3 \stackrel{\line(1,0){15}}{}
3 \stackrel{\line(1,0){15}}{}
&3
& \stackrel{\line(1,0){15}}{}
 2 \stackrel{\line(1,0){15}}{}
 1 \\
&&\vert&\\
&&1&\\
\end{array}
$$
on $X$. The image of $l_4$ on $X/\mu$ is not Cartier, so it does not pull back to $\widehat{X/\mu}$. However,  $2l_4$ does and its class 
on $\widehat{X/\mu}$ is the following.
$$
\begin{array} {rcccl}
&0&&&\\
&\vert&&&\\
0 \stackrel{\line(1,0){15}}{}& 0 & \stackrel{\line(1,0){15}}{} 0 \hskip 130pt&&\\
&\vert&&&\\
 & 3 &&&\\
 &\vert&&&\\
& 6&
 \stackrel{\line(1,0){15}}{}
 6\stackrel{\line(1,0){15}}{}
 6 \stackrel{\line(1,0){15}}{}
 6 \stackrel{\line(1,0){15}}{}
 6 \stackrel{\line(1,0){15}}{}
 6 \stackrel{\line(1,0){15}}{}
 &6&
\stackrel{\line(1,0){15}}{}
 4 \stackrel{\line(1,0){15}}{} 
 2
 \\
 &\vert&&\vert &\\
&3&&2&\\
&\vert&&&\\
0\stackrel{\line(1,0){15}}{}&0& \stackrel{\line(1,0){15}}{}0\hskip 130pt
&&
\\
&\vert&&&\\
&0&&&
\end{array}
$$
\end{remark}

\medskip
The following result follows from the work of Morrison \cite{Morrison}, but we will be later able to 
establish it directly. 
\begin{theorem}\label{S-I}
For a generic choice of $a$ and $b$ there exist elliptic curves $E_1$ and $E_2$ such
that the above K3 surface $\widetilde{X/\mu}$ is isomorphic to the Kummer surface 
constructed from the product of elliptic curves $E_1\times E_2$.
\end{theorem}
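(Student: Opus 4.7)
The plan is to deduce the theorem from Morrison's theory of Shioda-Inose structures \cite{Morrison}. First I would verify that $\mu$ is a symplectic (Nikulin) involution. Using the Poincaré residue formula on the equation $y^2+z+z^{-1}+x^3+ax+b=0$, the holomorphic $2$-form on $X$ is (up to scalar) $\Omega=\frac{dx\wedge dz}{2yz}$ on the open chart, and a direct substitution gives $\mu^{*}\Omega=\Omega$. This is consistent with the previously established fact that $\mu$ has eight isolated fixed points and guarantees that $\widetilde{X/\mu}$ is itself a K3 surface.

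Next I would identify $\mu$ with the canonical Nikulin involution supplied by Morrison's theorem. By \cite{Morrison}, whenever $E_8(-1)^{\oplus 2}$ embeds primitively into $\Pic(X)$, there exists a Nikulin involution whose desingularized quotient is a Kummer surface $\mathrm{Km}(A)$ for some abelian surface $A$, with $T(A)\cong T(X)$ as weight-two Hodge structures; moreover this involution is determined up to $\Pic$-isometry by the fact that it swaps the two $E_8(-1)$ summands and fixes their orthogonal complement. Since this is exactly the action of $\mu$ recorded in the discussion preceding the theorem (the two legs of diagram \eqref{graph} are interchanged and the sublattice $\langle S,F\rangle$ is pointwise fixed), $\mu$ realizes Morrison's Shioda-Inose involution, whence $\widetilde{X/\mu}\cong \mathrm{Km}(A)$ for some abelian surface $A$.

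It remains to recognize $A$ as a product of elliptic curves. For generic $(a,b)$ the lattice $E_8(-1)^{\oplus 2}\oplus U$ is the full Picard lattice of $X$, and by unimodularity of $H^2(\mathrm{K3},\Zz)$ the transcendental lattice $T(X)$ is isomorphic to $U\oplus U$. An abelian surface whose transcendental lattice is $U\oplus U$ has no non-trivial endomorphism structure beyond $\Zz$; equivalently, its period point in the Siegel upper half-space $\mathfrak{H}_2$ is diagonal. Such an abelian surface splits as $E_1\times E_2$ for a uniquely determined unordered pair of non-isogenous elliptic curves, which yields the desired Shioda-Inose identification $\widetilde{X/\mu}\cong \mathrm{Km}(E_1\times E_2)$.

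The main obstacle is the identification carried out in the second paragraph: matching our explicit geometric involution $\mu$ with Morrison's abstract, lattice-theoretically defined Nikulin involution, which reduces to verifying that $\mu$ induces exactly the prescribed isometry on $\Pic(X)$. What this strategy does \emph{not} produce is any explicit dictionary between $(a,b)$ and the $J$-invariants $(j_1,j_2)$ of the elliptic curves $E_1,E_2$; extracting that concrete correspondence is precisely the content of the later Section \ref{secexp}, which is why the theorem is stated here in pure existence form.
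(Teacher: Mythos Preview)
Your strategy is the same as the paper's: both reduce to Morrison \cite{Morrison}. The paper's three-sentence argument simply observes that the embedding $E_8(-1)^2\oplus U\hookrightarrow H^2(X,\Zz)$ is unique up to isometry by \cite[Theorem 1.14.4]{Nikulin}, so $T(X)\cong U\oplus U\cong T(E_1\times E_2)$, and then quotes \cite[Theorem 6.3]{Morrison} to conclude that $X$ and $E_1\times E_2$ form a Shioda--Inose pair. The paper does not spell out that the Nikulin involution produced by Morrison coincides with the explicit $\mu$; this is taken for granted from the discussion immediately preceding the theorem, where $\mu$ was introduced as the Morrison--Nikulin involution. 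Your argument makes that identification explicit, which is a genuine gain in rigor; the cleanest way to finish it is via the global Torelli theorem: both $\mu$ and Morrison's involution act on $H^2(X,\Zz)=E_8(-1)^2\oplus U^{\oplus 3}$ by the standard swap of the two $E_8(-1)$ summands and the identity on the orthogonal complement (the latter because any symplectic involution acts trivially on $T(X)$), hence they agree as automorphisms of $X$.

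One slip in your final paragraph: for a product $A=E_1\times E_2$ of non-isogenous elliptic curves one has $\End(A)\cong\Zz\times\Zz$, not $\Zz$, so the phrase ``no non-trivial endomorphism structure beyond $\Zz$'' is not correct and does not by itself force the period point to be diagonal. The clean argument that $T(A)\cong U^{\oplus 2}$ forces $A\cong E_1\times E_2$ is lattice-theoretic: since $H^2(A,\Zz)\cong U^{\oplus 3}$ is unimodular and the embedding of $U^{\oplus 2}$ is unique up to isometry, one gets $\NS(A)\cong U$, whose two primitive isotropic generators $e,f$ with $e\cdot f=1$ yield elliptic fibrations $A\to E_1$ and $A\to E_2$, and the induced map $A\to E_1\times E_2$ has degree $e\cdot f=1$. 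This is essentially the content of Shioda--Mitani (cf.\ also \cite[Section 6]{Morrison}).
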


\begin{proof}
The transcendental lattices of the abelian surface $E_1\times E_2$ and of $X$ are isomorphic, 
since they are both isomorphic to $U\oplus U$. Indeed, the embedding of $E_8(-1)\oplus E_8(-1)\oplus U$
into $H^2(X,\Zz)$ is unique up to isometry by \cite[Theorem 1.14.4]{Nikulin}, so the transcendental lattice is isomorphic to
$U\oplus U$.
This implies that $E_1\times E_2$ and $X$ form
Shioda-Inose pair, by \cite[Theorem 6.3]{Morrison}.
\end{proof}

\begin{remark}
While the above theorem establishes an isomorphism, it does not make it explicit. It also does not identify how the natural parameters on the moduli space 
of pairs of elliptic curves correspond to the parameters of $(a,b)$ of the family $X(a,b)$ in \eqref{c-s-reduced}. The next two sections make this isomorphism explicit, including the parameter identification.
\end{remark}

\section{Kummer surfaces associated to the product of two elliptic curves}\label{secKummer}

In this section we review the theory of Kummer surfaces as it associated to the product of two elliptic curves. Our main goal is to set up the notations that will be used in the subsequent sections.

\medskip
Let $E_1$ and $E_2$ be two elliptic curves.
Define the Kummer surface $Y $ as the minimal resolution of the quotient   $(E_1\times E_2)/\la \pm 1\ra$ of 
the product of these elliptic curves by the negation involution.


\medskip
There are four \emph{horizontal} smooth rational curves $F_{1,i}$ where $1\leq i\leq 4$ which correspond
to the proper preimage of $(E_1\times w)/\la \pm 1\ra \subset (E_1\times E_2)/\la \pm 1 \ra$ for a point $w\in E_2$
of order $2$. There are also four \emph{vertical} rational curves $F_{2,j}$ which are proper preimages of 
$(w\times E_2) /\la \pm 1\ra \subset (E_1\times E_2)/\la \pm 1 \ra$. These eight curves are disjoint from each other.

\medskip
There are also sixteen pairwise disjoint rational curves $G_{ij}$ which are the exceptional curves of $Y\to  (E_1\times E_2)/\la \pm 1 \ra$ that satisfy
$$
G_{ij} F_{1,k} = \delta_{i,k},~~G_{ij} F_{2,k} = \delta_{j,k}.
$$
where $\delta$ is the Kronecker symbol.

\medskip
Note that there is a natural map 
$$
Y\to \Pp^1\times \Pp^1 \cong (E_1/\la \pm 1\ra )\times (E_2 /\la\pm 1\ra)
$$
and the preimages of the $\Oo(0,1)$ and $\Oo(1,0)$ will be denoted by $F_1$ and $F_2$.
Each of these invertible sheaves defines an elliptic fibration structure on $Y$, with four fibers of type ${\rm I_0^*}$.
Specifically, for each $i$ we have the relations in Picard group of $Y$
$$
F_1  = 2F_{1,i} + \sum_{j=1}^4 G_{i,j},~~ F_2 = 2F_{2,i} + \sum_{j=1}^4 G_{j,i}.
$$

\begin{remark}
It can be shown that for very general choices of $E_1$ and $E_2$, the divisors $G_{ij},F_{1,i},F_{2,i}$ generate 
the Picard group of $Y$, but we will not need this statement.
\end{remark}

\begin{remark}\label{morecurves}
There are other smooth rational curves on $Y$. For example, consider the class $D=F_1+F_2 - G_{i_1,j_1}-G_{i_2,j_2}-G_{i_3,j_3}$.
Since $D^2 = -2$, and $DF_1=2>0$, it is an effective class. It is easy to see that 
if all $i_k$ are distinct and all $j_k$ are distinct then $h^0(Y,D)=1$ and it is given by a smooth rational curve which is a proper preimage
of a $(1,1)$ curve on $\Pp^1\times \Pp^1$ that passes through the three points that are the images of the type  $G$ divisors.
\end{remark}

\medskip 
We find it useful to denote the divisor classes on $Y$ by a pair of numbers and a matrix in order to better visualize them. For example,
$$
(3,2) ;~ \left(
\begin{array}{rrrr}
0 & 0 & -1 & 1\\
0& 0& 0& 0\\
4 & 0&0&0\\
0& 0& 0&0
\end{array}
\right)
$$
stands for $3F_1 + 2F_2 - G_{1,3}+ G_{1,4} + 4 G_{3,1}$. Some of the classes might require half-integers to be expressed in this manner. For example, $F_{1,3}$  and $F_{2,2}$ are represented by
$$
(1,0); ~
\left(
\begin{array}{rrrr}
0 & 0 & 0 & 0\\
0& 0& 0& 0\\
-\frac 12 &-\frac 12 &-\frac 12&-\frac 12\\
0& 0& 0&0
\end{array}
\right)
\mbox{~~and ~~}
(0,1); ~
\left(
\begin{array}{rrrr}
0 & -\frac 12 & 0 & 0\\
0& -\frac 12& 0& 0\\
0 &-\frac 12 & 0&0\\
0& -\frac 12& 0&0
\end{array}
\right)
$$
respectively. 
The self-intersection of the divisor given by  $(a,b); A$ is $4 a b - 2 \Tr (AA^T)$.

\section{Isomorphism}\label{seciso}
In this section we will construct  isomorphisms between the surfaces $Y$ and $\widetilde{X/\mu}$ from  previous two sections, which will make Theorem \ref{S-I} more
explicit. Note that such isomorphism is far from unique, since $Aut(Y)$ is infinite, with generators calculated in \cite{KK}.

\begin{remark}
The process of finding an explicit isomorphism started from looking for the generically $2:1$ map 
$\widetilde {X/\mu}\to (\Pp^1)^2$ that comes from taking the quotient by the  involution $(x,y,z)\mapsto (x,-y,z)$  and then contractions,
by identifying the branch locus of the map. However, the argument that will be presented in this paper goes from
the opposite direction. Thus, even though the construction may look like a magic trick of some sort, it is actually an outcome of a deliberate process, aided by Pari-GP.
\end{remark} 

Consider the divisor $D$ on $Y$ given by 
$$
(3,4);~
\left(
\begin{array}{rrrr}
-1 & -1 & -2 & 0\\
-2 & -2 &  0 & 0\\
0  &  0 & 0  & -3\\
0  &  0 & -1 & 0
\end{array}
\right)
$$
We have $D^2 = 48 - 2(6+8+9+1) = 0$, so $|D|$ gives an elliptic fibration, as long as it is base-point free.

\medskip
Observe that this divisor 
$$
D = 3F_1 +4F_2 - G_{1,1}-G_{1,2}-2G_{1,3}-2G_{2,1}-2G_{2,2}-3G_{3,4}-G_{4,3}
$$ 
intersects trivially the following nine irreducible divisors that are arranged in an affine $E_8$ diagram according 
to their intersections.
$$
\begin{array}{rcl}
 F_{1,1} \stackrel{\line(1,0){15}}{}
 G_{1,4} \stackrel{\line(1,0){15}}{}
 &
 F_{2,4}
 &
 \stackrel{\line(1,0){15}}{}
G_{2,4}
\stackrel{\line(1,0){15}}{}
F_{1,2}
\stackrel{\line(1,0){15}}{}
G_{2,3}
\stackrel{\line(1,0){15}}{}
F_{2,3}
\stackrel{\line(1,0){15}}{}
G_{3,3}
\\
&\vline&\\
&G_{4,4}& 
\end{array}
$$

Moreover, one has the identity in $\Pic(Y)$ 
\begin{equation}\label{e8fiber}
D = 2F_{1,1} + 4G_{1,4} + 3G_{4,4}+ 6F_{2,4} + 5G_{2,4}+4F_{1,2}+3G_{2,3}+2F_{2,3}+G_{3,3}.
\end{equation}
Since it can be seen that the restriction of $D$ to each of the above curves is trivial, an easy calculation shows that $h^0(D)=2$ and we see that $|D|$ gives
an elliptic fibration $Y\to \Pp^1$ with the above type ${\rm II^*}$ fiber. 

\medskip
We will also now establish two ${\rm I_0^*}$ fibers. 
We observe that $F_{2,1}$, $G_{3,1}$, $G_{4,1}$ intersect $D$ trivially, as does the curve
$C_1$ in class $\vert F_1+F_2 - G_{1,3} - G_{2,2} - G_{3,4}\vert $ considered in Remark \ref{morecurves}.
Note that $F_{2,1}$ intersects the other three curves at one point each, and these intersection points are distinct.
We denote by $C_2$ the difference
\begin{align*}
C_2 = &D - 2F_{2,1} - G_{3,1}-G_{4,1}-C_1 \\
=&2F_1+2F_2 -G_{1,2}-G_{1,3}-G_{2,1}-G_{2,2}-G_{4,3}  - 2G_{3,4}.
\end{align*}
Observe that the dimension count shows that there exists a $(2,2)$ curve on $\Pp^1\times \Pp^1$
with double vanishing at a point and vanishing at five  more points. Moreover, while this configuration of six
points is not generic, one can easily see that this curve is irreducible by looking at possible splittings of it.
We will also calculate the equation of this curve explicitly in the next section. Therefore, we get a ${\rm I_0^*}$ fiber of $Y\to \Pp^1$ with curves 
$$
\begin{array}{rcl}
&C_1&\\
&\vert &\\
C_2 \stackrel{\line(1,0){15}}{}& F_{2,1} &  \stackrel{\line(1,0){15}}{} G_{3,1}\\
&\vert &\\
&G_{4,1}&
\end{array}
$$
The other fiber consists of $F_{2,2}$, $G_{3,2}$, $G_{4,2}$ and two other rational curves $C_3$ and $C_4$ 
defined similarly to $C_1$ and $C_2$.

\medskip
Note that we also have an explicit section of the elliptic fibration which is given by $F_{1,3}$.
In fact, we can label the tree of rational curves in \eqref{graph-mu} by the corresponding curves
in $Y$ as follows. This labeling can be thought of as a key to our construction, both mathematically and visually.
\begin{equation}\label{graph-mu-labels}
\begin{array} {rcccl}
&C_1&&&\\
&\vert&&&\\
C_2 \stackrel{\line(1,0){10}}{}&F_{2,1}&\hskip -11pt \stackrel{\line(1,0){10}}{}G_{4,1}\hskip 150pt&&\\
&\vert&&&\\
 &G_{3,1}&&&\\
 &\vert&&&\\
&F_{1,3}&
\hskip -10pt
 \stackrel{\line(1,0){10}}{}
 G_{3,3} \stackrel{\line(1,0){10}}{}
 F_{2,3}\stackrel{\line(1,0){10}}{}
 G_{2,3} \stackrel{\line(1,0){10}}{}
 F_{1,2}\stackrel{\line(1,0){10}}{}
 G_{2,4}\stackrel{\line(1,0){10}}{}
 &F_{2,4}&
\stackrel{\line(1,0){10}}{}
 G_{1,4}\stackrel{\line(1,0){10}}{} F_{1,1}
 \\
 &\vert&&\vert &\\
&G_{3,2}&&G_{4,4}&\\
&\vert&&&\\
C_4\stackrel{\line(1,0){10}}{}&F_{2,2}& \hskip -11pt\stackrel{\line(1,0){10}}{}G_{4,2}\hskip 150pt
&&
\\
&\vert&&&\\
&C_3&&&
\end{array}
\end{equation}

\medskip
\begin{remark}
Now that we have established this  somewhat explicit
isomorphism between generic Kummer surfaces of $E_1\times E_2$ and 
the resolutions of the quotients $\widetilde {X/\mu}$, we would like to identify the natural involutions on them.
The Kummer surface has an involution with the fixed locus $\sqcup_iF_{1,i}\sqcup_iF_{2,i}$ 
such that the corresponding quotient is the blowup of $(\Pp^1)^2$ in $16$ points. The surface $\widetilde {X/\mu}$
has an involution that comes from the involution
$$
(x,y,z)\mapsto (x,-y,z)
$$
on $X$ that commutes with $\mu$. We claim that under our isomorphism these involutions coincide. Indeed, 
the composition of these involutions preserves the holomorphic two form on $Y\cong \widetilde {X/\mu}$. It also
has $F_{1,3}$ in its fixed locus. Since the fixed locus must be even dimensional, it means that the product of the involutions is the identity.
\end{remark}

\medskip
\begin{corollary}\label{coroF14}
The above remark implies that the remaining type $F$ curve $F_{1,4}$ is the three-section whose fibers consist of nonzero points of order two. Indeed, this three-section is fixed under $(x,y,z)\mapsto (x,-y,z)$.
\end{corollary}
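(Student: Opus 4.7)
The plan is to identify $F_{1,4}$ by producing the natural geometric candidate for a fixed $3$-section on $X$ and then using process of elimination among the eight fixed curves of the Kummer involution on $Y$. Specifically I would start by pinning down the candidate on $X$: let $T := \{y=0\}\subset X$, which in view of the Weierstrass equation $y^2 = -(x^3+ax+b+z+z^{-1})$ on each fiber is the closure of the locus of nonzero order-two points of the elliptic fibration $z:X\to\Pp^1$. In particular $T$ is a $3$-section of this fibration; it is fixed pointwise by $\sigma:(x,y,z)\mapsto(x,-y,z)$; and it is $\mu$-invariant, since its defining equation $x^3+ax+(b+z+z^{-1})=0$ is symmetric under $z\leftrightarrow z^{-1}$. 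Hence the image of $T$ descends to a curve $T'\subset\widetilde{X/\mu}$ that is fixed pointwise by the involution induced by $\sigma$.

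Next I would invoke the preceding remark: under the isomorphism $\widetilde{X/\mu}\cong Y$, the involution induced by $\sigma$ coincides with the Kummer involution on $Y$, whose fixed locus is exactly $\bigsqcup_i F_{1,i}\sqcup \bigsqcup_j F_{2,j}$. Therefore $T'$ is supported on these eight rational curves. A brief Riemann--Hurwitz computation --- the projection $T\to\Pp^1_x$ is a double cover branched at the six roots of $(x^3+ax+b)^2-4=0$, so $g(T)=2$, while $\mu$ acts on $T$ with six fixed points above $z=\pm 1$ --- gives $g(T/\mu)=0$, so $T'$ is a single smooth $\Pp^1$ rather than a reducible fixed subcurve.

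Finally I would eliminate using the labeling \eqref{graph-mu-labels}: the curves $F_{1,1}, F_{1,2}, F_{2,3}, F_{2,4}$ are absorbed into the ${\rm II}^*$ fiber, $F_{2,1}$ and $F_{2,2}$ sit in the two ${\rm I}_0^*$ fibers, and $F_{1,3}$ is the section, so the only candidate left is $F_{1,4}$. The $3$-section property is then confirmed by the direct intersection $F_{1,4}\cdot D = 3\,F_{1,4}\cdot F_1 + 4\,F_{1,4}\cdot F_2 - F_{1,4}\cdot G_{4,3} = 0 + 4 - 1 = 3$. The only delicate step is checking that $T$ is irreducible for generic $(a,b)$ so that $T'$ does not split across several of the $F_{1,i}$'s, but this is a standard genericity argument for the Laurent polynomial $x^3+ax+(b+z+z^{-1})$ in $\Cc[x,z,z^{-1}]$.
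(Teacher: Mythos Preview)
Your argument is correct and follows the same line as the paper's, which is really a one-sentence sketch: the three-section of nonzero order-two points is fixed by $(x,y,z)\mapsto(x,-y,z)$, the preceding remark identifies this involution with the Kummer involution on $Y$, and $F_{1,4}$ is the only $F$-curve not already placed in the diagram \eqref{graph-mu-labels}. You have simply fleshed out each step. The Riemann--Hurwitz computation showing $g(T/\mu)=0$ and the intersection check $F_{1,4}\cdot D=3$ are nice confirmations that the paper omits, but they are not a different route---they are the details behind the paper's implicit process-of-elimination.
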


\begin{remark}\label{auto}
One may switch the roles of $E_1$ and $E_2$ and consider the elliptic fibration $Y\to \Pp^1$ given by
the divisor
$$
(4,3);~
\left(
\begin{array}{rrrr}
-1 & -2 & 0 & 0\\
-1 & -2 &  0 & 0\\
-2  &  0 & 0  & -1\\
0  &  0 & -3 & 0
\end{array}
\right).
$$
It can be shown that this new divisor maps to $D$ under one of the generators of the automorphism group $Y$ described in \cite{KK}.
\end{remark}

\begin{remark}\label{YtoX}
We observe that $X$ can be obtained from $Y$ by taking a double cover ramified at the disjoint rational curves
$$
C_1,C_2,C_3,C_4,G_{3,1},G_{3,2},G_{4,1},G_{4,2}
$$
and then contracting the ramification locus.
\end{remark}

\begin{remark}
While the discussion of this section identifies the Picard lattice of the generic surfaces $Y$ and $\widehat{X/\mu}$, it does not provide us with an identification
of the parameters of the corresponding families. This will be accomplished in the next section by a direct calculation.
\end{remark}

\section{Explicit formulas}\label{secexp}
In this section we will make explicit the isomorphism from the previous section. We used Maple symbolic manipulation software, although in principle these formulas are simple enough to do by hand.

\medskip
We will explicitly compute two sections of $D$ on $Y$. We will assume that $Y$ is the resolution of the double
cover of $\Pp^1\times \Pp^1$ with homogeneous coordinates $(u_1:v_1)$ and $(u_2:v_2)$ ramified at
$$
\{u_2=0\} \cup\{v_2=0\} \cup \{u_2=v_2\} \cup \{u_2=\lambda_2 v_2\}
$$$$\cup
\{u_1=0\} \cup\{v_1=0\} \cup \{u_1=v_1\} \cup \{u_1=\lambda_1 v_1\}.
$$
The proper preimages of the above lines are $F_{1,1},\ldots, F_{1,4},F_{2,1},\ldots, F_{2,4}$
in this order.
Then we are looking for the homogeneous polynomials of bidegree $(4,3)$ with the vanishing conditions
prescribed by the matrix
$$\left(
\begin{array}{rrrr}
-1 & -1 & -2 & 0\\
-2 & -2 &  0 & 0\\
0  &  0 & 0  & -3\\
0  &  0 & -1 & 0
\end{array}
\right).
$$ 

\medskip
By \eqref{e8fiber} the $E_8$ fiber corresponds on $\Pp^1\times \Pp^1$ to the $(4,3)$ polynomial
$$H_\infty = (\lambda_2-1)(u_1-\lambda_1v_1)^3(u_1-v_1)u_2v_2^2.$$
It is scaled by $(\lambda_2-1)$ to simplify some further formulas.
   
\medskip
It is a bit harder to calculate the other fibers. Specifically, for the fiber $2F_{2,1}+G_{3,1}+G_{4,1}+C_1+C_2$
we need to know the equations of $C_1$ and $C_2$. The curve $C_1$ comes from a $(1,1)$ polynomial which 
vanishes on $(1:1; 0:1)$, $(1:0;1:0)$ and $(\lambda_1:1;1:1)$. Such polynomial is given by
$$
(\lambda_1 - 1)   v_1u_2  - u_1v_2 + v_1v_2.
$$
To find the polynomial of $C_2$ we need to find a $(2,2)$ polynomial which vanishes to the second order 
at 
$(\lambda_1:1;1:1)$
and vanishes at $(1:0;0:1)$, $(1:1;0:1)$, $(0:1;1:0)$, $(1:0;1:0)$, $(1:1;\lambda_2:1)$.
It is given by 
$$
\lambda_1(\lambda_1-1)v_1^2v_2^2 + \lambda_1(\lambda_1\lambda_2 -2\lambda_1+1) v_1^2u_2v_2
-\lambda_1(\lambda_1-1)u_1v_1v_2^2
$$$$ + (2\lambda_1^2-2\lambda_1\lambda_2-\lambda_1+1)u_1v_1u_2v_2
-(\lambda_1-1)^2u_1v_1u_2^2 +(\lambda_2-1)u_1^2u_2v_2.
$$
Overall, the $(4,3)$ polynomial is given by 
$$
H_{+} = u_1\,\Big((\lambda_1 - 1)   v_1u_2  - u_1v_2 + v_1v_2\Big)\cdot
$$
$$
\cdot\Big(\lambda_1(\lambda_1-1)v_1^2v_2^2 + \lambda_1(\lambda_1\lambda_2 -2\lambda_1+1) v_1^2u_2v_2
-\lambda_1(\lambda_1-1)u_1v_1v_2^2
$$$$ + (2\lambda_1^2-2\lambda_1\lambda_2-\lambda_1+1)u_1v_1u_2v_2
-(\lambda_1-1)^2u_1v_1u_2^2 +(\lambda_2-1)u_1^2u_2v_2
\Big).
$$

\medskip
For the other fiber we need to find the equations of 
$$C_3 \in
\vert F_1+F_2 - G_{1,3} - G_{2,1} - G_{3,4}\vert $$
and 
$$C_4\in 
\vert 
2F_1+2F_2 
 -G_{1,1}-G_{1,3}-G_{2,2}-G_{2,1}-G_{4,3}  - 2G_{3,4}
\vert.
$$
These polynomials are given by 
$$
(\lambda_1-1)u_1u_2 - \lambda_1 u_1v_2+ \lambda_1 v_1v_2
$$
and
$$
-\lambda_1^2(\lambda_2-1)v_1^2 u_2v_2 + (1-\lambda_1)u_1v_1v_2^2 +(-\lambda_1^2 + 2\lambda_1\lambda_2+\lambda_1-2)u_1v_1u_2v_2
$$
$$
+(\lambda_1-1)^2 u_1v_1 u_2^2 + (\lambda_1-1)u_1^2v_2^2 
+(-\lambda_1-\lambda_2+2)u_1^2u_2v_2
$$
respectively, 
which gives  
$$
H_- = v_1 \Big(
(\lambda_1-1)u_1u_2 - \lambda_1 u_1v_2+ \lambda_1 v_1v_2
\Big)\cdot
$$
$$\cdot
\Big(
-\lambda_1^2(\lambda_2-1)v_1^2 u_2v_2 + (1-\lambda_1)u_1v_1v_2^2 +(-\lambda_1^2 + 2\lambda_1\lambda_2+\lambda_1-2)u_1v_1u_2v_2
$$
$$
+(\lambda_1-1)^2 u_1v_1 u_2^2 + (\lambda_1-1)u_1^2v_2^2 
+(-\lambda_1-\lambda_2+2)u_1^2u_2v_2\Big).
$$
As expected, the three polynomials $H_\infty$, $H_+$ and $H_-$ are linearly dependent, specifically,
$$
H_\infty + H_+ + H_- = 0.
$$
Recall that the rational function $z+z^{-1}$ takes the value $\infty$, $2$ and $(-2)$ at the three respective fibers.
As such, we see that $z+z^{-1}$ is given by $2H_\infty^{-1}(H_+ - H_-)$, which fixes it, once we fix the order of ${\rm I_0^*}$ fibers.

\begin{remark}
The rational map $X-\to \widetilde{X/\mu} \cong Y$ is ramified along the eight divisors 
$$
C_1,C_2,C_3,C_4, G_{3,1}, G_{4,1},G_{3,2},G_{4,2}.
$$
This means that the field of rational functions of $X$ is the degree two extension of the field of the rational functions of the Kummer surface $Y$ obtained by attaching the square root of 
$$
H_+ H_-^{-1}.
$$
In the notations of \eqref{c-s-reduced}, this square root would be $\frac {z-1}{z+1}$.
\end{remark}

\medskip
Our next goal to explicitly calculate the embedding into a toric variety that allows one to write $Y$ as a quotient 
of hypersurface of \eqref{c-s-reduced}. In order to accomplish this, 
we will now consider the meaning of $x$ and $y$, based on Remark \ref{l3l4}.

\medskip
First observe that $x$ is a rational function which is the ratio of two sections of the divisor of $l_3$.
These are given by 
\begin{equation}\label{dec}
G_{3,1}+G_{3,2}+2F_{1,3}+2G_{3,3}+2F_{2,3}+2G_{2,3}+2F_{1,2}+2G_{2,4}+2F_{2,4}+G_{1,4}+G_{4,4}
\end{equation}
$$
=2F_1+2F_2 -G_{1,3}-G_{4,3}
 -G_{2,1}-G_{2,2}-2G_{3,4}.
$$
In addition to the divisor \eqref{dec} above that corresponds to 
$$
(u_1-v_1)(u_1-\lambda_1v_1)(u_2-v_2)v_2
$$
one has
$$
(\lambda_1^2 \lambda_2-2\lambda_1^2+\lambda_1) v_1^2 u_2v_2 
+(2\lambda_1-\lambda_1^2-1)u_1v_1 u_2^2 
+( 2\lambda_1^2-2\lambda_1\lambda_2-\lambda_1+1) u_1v_1u_2v_2
$$
$$
+(\lambda_1-\lambda_1^2) u_1v_1v_2^2
+(\lambda_2-1)u_1^2u_2v_2.
$$
We denote by $x_1$  the ratio
$$
\Big(
(\lambda_1^2 \lambda_2-2\lambda_1^2+\lambda_1) v_1^2 u_2v_2 
+(2\lambda_1-\lambda_1^2-1)u_1v_1 u_2^2 
+( 2\lambda_1^2-2\lambda_1\lambda_2-\lambda_1+1) u_1v_1u_2v_2
$$
$$
+(\lambda_1-\lambda_1^2) u_1v_1v_2^2
+(\lambda_2-1)u_1^2u_2v_2\Big)
(u_1-v_1)^{-1}(u_1-\lambda_1v_1)^{-1}(u_2-v_2)^{-1}v_2^{-1}.
$$
It will be equal to $x$ up to a change of coordinates.

\medskip
We see that $y^2$ makes sense as a rational function on $X/\mu$. In a generic fiber of $X/\mu\to \Pp^1$ 
it has zeros of order two  
at points of order two and pole of order six at zero. Note that the three-section of points of order two is identified with $F_{1,4}$ by Corollary \ref{coroF14}. On the resolution $\widetilde {X/\mu}$ it will have zeros of order one at $C_1,C_2,G_{4,1},G_{4,2},C_3,C_4$  and poles of order three at $G_{3,1},G_{3,2}$. Since we know that its divisor
intersects all curves at degree zero, we can figure out the contribution of the components
of the $E_8$ fiber. This gives the divisor of $y^2$ as
$$
2F_{1,4} - \begin{array} {rcccl}
&(-1)&&&\\
&\vert&&&\\
(-1)\stackrel{\line(1,0){12}}{}& 0 & \hskip -10pt \stackrel{\line(1,0){12}}{} (-1) \hskip 100pt&&\\
&\vert&&&\\
 & 3&&&\\
 &\vert&&&\\
& 6&
\hskip -10pt \stackrel{\line(1,0){12}}{}
 6\stackrel{\line(1,0){12}}{}
 6 \stackrel{\line(1,0){12}}{}
6\stackrel{\line(1,0){12}}{}
 6 \stackrel{\line(1,0){12}}{}
 6 \stackrel{\line(1,0){12}}{}
 &6&
\stackrel{\line(1,0){12}}{}
 4 \stackrel{\line(1,0){12}}{} 
 2
 \\
 &\vert&&\vert &\\
&3&&2&\\
&\vert&&&\\
(-1)\stackrel{\line(1,0){10}}{}&0&\hskip-10pt \stackrel{\line(1,0){10}}{}(-1)\hskip 100pt
&&
\\
&\vert&&&\\
&(-1)&&&
\end{array}
$$
which is consistent with the formula for the divisor of $2l_4$ from Remark \ref{l3l4} and Corollary \ref{coroF14}.
So we can write the rational function of $y^2$ in terms of $(u_1:v_1,u_2:v_2)$ 
as 
$$
y_1^2 = (u_2-\lambda_2 v_2) H_+ H_-  u_1^{-1} v_1^{-1} (u_2-v_2)^{-3} (u_1-v_1)^{-3}v_2^{-3} (u_1-\lambda_1 v_1)^{-3} u_2^{-1}
$$
up to a constant factor.

\medskip
A Maple calculation verifies that 
$$
x_1^3 + (\lambda_1\lambda_2 - 2\lambda_1 + \lambda_2 +1) x_1^2 -(\lambda_1\lambda_2 -\lambda_1+1)(\lambda_1-\lambda_2)x_1 -\frac 12(\lambda_1-1)(\lambda_2-1)\lambda_1\lambda_2 
$$
$$
+y_1^2 - \frac 14\lambda_1(\lambda_1-1)\lambda_2(\lambda_2-1)(z+z^{-1})= 0.
$$
By a linear change of variables this equation can be rewritten in the form \eqref{c-s-reduced} 
$$
y^2+z+z^{-1}+x^3+ax+b=0
$$
with 
$$
a=-\frac {2^{\frac 43}}3 \frac {(\lambda_1^2-\lambda_1+1)(\lambda_2^2-\lambda_2+1)}{\Big(\lambda_1(\lambda_1-1)
\lambda_2(\lambda_2-1)\Big)^{\frac 23}},
$$$$
b=-\frac 2{27}\frac {(\lambda_1+1)(\lambda_1-2)(2\lambda_1-1)(\lambda_2+1)(\lambda_2-2)(2\lambda_2-1)}{\lambda_1(\lambda_1-1)
\lambda_2(\lambda_2-1)}.
$$
By using the formula \cite{Silverman}
$$
j =256 \frac{(\lambda^2-\lambda+1)^3}{\lambda^2(\lambda-1)^2},
$$
we can rewrite the coefficients $a$ and $b$ 
in terms of the $J$-invariants $j_1$ and $j_2$ of the elliptic curves $E_1$ and $E_2$ as
$$
a=  -\frac{1} {48}j_1^{\frac 13}j_2^{\frac 13},\hskip 20pt
b=-\frac{ (j_1-1728)^{\frac 12}(j_2-1728)^{\frac 12}}{864}
$$
where different choices of the roots lead to isomorphic surfaces.

\medskip
We will state this as a main result of this paper.
\begin{theorem}\label{main}
For generic choices of elliptic curves $E_1$ and $E_2$ with $J$-invariants $j_1$ and $j_2$ the 
surface $X$ which is the compactification of
the solution space of
\begin{equation}\label{jinv}
y^2 + z+z^{-1} + x^3  -\frac{j_1^{\frac 13}j_2^{\frac 13} }{48}\,x  -\frac{ (j_1-1728)^{\frac 12}(j_2-1728)^{\frac 12}}{864} = 0
\end{equation}
and $E_1\times E_2$ form a Shioda-Inose pair. Specifically, the minimal resolution 
of the quotient of $X$ by $\mu:(x,y,z)\mapsto (x,-y,z^{-1})$ 
is isomorphic to the Kummer surface of $E_1\times E_2$.
\end{theorem}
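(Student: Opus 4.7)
The plan is to combine the abstract Shioda--Inose correspondence from Theorem \ref{S-I} with the explicit birational geometry developed in Sections \ref{seciso} and \ref{secexp}. Theorem \ref{S-I} already guarantees that for each generic $(a,b)$ there \emph{exists} a pair $(E_1, E_2)$ making $X(a,b)$ a Shioda--Inose partner of $E_1 \times E_2$, so what remains is to match the parameters explicitly and read off the equation \eqref{jinv}.

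First I would realize the Kummer surface $Y$ of $E_1 \times E_2$ concretely as the minimal resolution of the double cover of $\Pp^1 \times \Pp^1$ branched along the eight coordinate lines $\{u_i=0\},\{v_i=0\},\{u_i=v_i\},\{u_i=\lambda_iv_i\}$, where $\lambda_1,\lambda_2$ are Legendre parameters for $E_1, E_2$. The divisor $D$ constructed in Section \ref{seciso} equips $Y$ with an elliptic fibration carrying one fiber of type $\mathrm{II}^*$ and two of type $\mathrm{I}_0^*$, matching the Kodaira configuration on $\widetilde{X/\mu}$ displayed in \eqref{graph-mu}. The next step is to write down explicit sections of $|D|$: the $\mathrm{II}^*$ fiber is a single bidegree $(4,3)$ polynomial $H_\infty$ with prescribed multiplicities along the coordinate lines, and each $\mathrm{I}_0^*$ fiber factors as a $(1,1)$-curve (the $C_1$ or $C_3$ from Section \ref{seciso}) times a $(2,2)$-curve with a single node (the $C_2$ or $C_4$), each determined up to scalar by the vanishing conditions listed there. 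After normalization the three sections satisfy $H_\infty + H_+ + H_- = 0$, and the ratio $2(H_+ - H_-)/H_\infty$ pulls back to $z + z^{-1}$.

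Then, using the divisor-class descriptions of $l_3$ and $l_4$ from Remark \ref{l3l4} together with the identification of $F_{1,4}$ with the three-section of nonzero order-two points from Corollary \ref{coroF14}, I would lift $x$ and $y^2$ to explicit rational functions $x_1, y_1^2$ on $Y$ by matching zeros and poles. Substituting $x_1$, $y_1$ and $z + z^{-1}$ into $y^2 + z + z^{-1} + x^3 + ax + b$ yields, after clearing denominators, a polynomial identity in $\Cc[u_i,v_i,\lambda_1,\lambda_2]$ whose vanishing forces $a$ and $b$ to be specific symmetric rational functions of $\lambda_1$ and $\lambda_2$ (up to a linear change of variables in $x$). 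Finally, the standard Legendre-to-$j$ formula $j = 256(\lambda^2-\lambda+1)^3/(\lambda^2(\lambda-1)^2)$ and its companion expression for $j - 1728$ convert these into the symmetric expressions in $j_1, j_2$ appearing in \eqref{jinv}, while the ambiguity in the choice of roots reflects only the isomorphism class of the elliptic curve rather than its Legendre presentation.

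The main obstacle is the explicit production of the $(2,2)$-polynomials cutting out $C_2$ and $C_4$: each is a nodal conic on $\Pp^1\times\Pp^1$ passing through five prescribed points with a double point at a sixth, which is a linear algebra problem in the nine-dimensional space of $(2,2)$-forms with eight affine conditions, and though solvable by hand it is the step whose bulk is unavoidable. Once those polynomials are in hand, the construction of $z + z^{-1}$, $x_1$ and $y_1^2$ and the verification of the Weierstrass-type identity reduce to a mechanical symbolic computation, and the final passage to $(j_1,j_2)$ is a direct substitution. Combined with the Shioda--Inose pairing supplied by Theorem \ref{S-I}, this yields the claim.
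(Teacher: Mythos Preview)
Your proposal is correct and follows essentially the same route as the paper: realize $Y$ as a double cover of $\Pp^1\times\Pp^1$ with Legendre parameters, write the three sections $H_\infty,H_+,H_-$ of $|D|$ explicitly, identify $z+z^{-1}$, $x_1$, $y_1^2$ via the divisor computations of Remark~\ref{l3l4} and Corollary~\ref{coroF14}, verify the resulting cubic relation (the paper uses Maple), and then pass from $(\lambda_1,\lambda_2)$ to $(j_1,j_2)$. One small correction: each $\mathrm{I}_0^*$ section $H_\pm$ is not just $(1,1)\times(2,2)$ but carries an additional linear factor $u_1$ (resp.\ $v_1$) coming from the central component $F_{2,1}$ (resp.\ $F_{2,2}$), so that the total bidegree is $(4,3)$; you would spot this immediately when matching degrees.
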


\begin{remark}
Even though the statement of Theorem \ref{main} is symmetric with respect to the interchange of $E_1$ and $E_2$, the isomorphism provided by our construction depends on the choice of the ordering. A different choice gives an isomorphism that differs by an automorphism of the Kummer surface of $E_1\times E_2$, see Remark \ref{auto}. 
\end{remark}

\begin{remark}\label{n1}
We can observe that there is a map from a double cover of $Y$ to $X$ under for any pair of values of the $J$-invariant.
Indeed, all of the formulas of this section are applicable as long as $\lambda_i\not\in \{0,1\}$. The only cases where Shioda-Inose structure is not given by Theorem \ref{main} are the cases where the surface acquires additional singularities. These would only occur at $y=0$, $z=\pm 1$. So we need to exclude the loci where $x^3+ax+b\pm 2$ have a double root. It turns out that this happens exactly when $j_1=j_2$, i.e. $E_1\cong E_2$.
\end{remark}

\section{One-dimensional subfamilies}\label{secdim1}
Our initial interest in this problem was motivated by the study of mirrors of $19$-dimensional families of polarized K3 surfaces of generic Picard rank one, as in \cite{Dolgachev}. It has been shown in \cite{Dolgachev} that the mirrors of K3 surfaces with
$\la 2n\ra$-polarization are one-parameter families birational to the double covers of Kummer surfaces of $E_{\tau} \times E_{-\frac 1{n\tau}}$ as $\tau$ varies in the upper half plane (or in the moduli curve $X_0(n)_+$). These mirror families are marked with a lattice $E_8(-1)\oplus E_8(-1)\oplus U\oplus \la -2n\ra$.

\medskip
Indeed as we specialize to $E_{\tau} \times E_{-\frac 1{n\tau}}$, the equations \eqref{jinv} will yield
surfaces of Picard rank $19$. 

\medskip
\begin{theorem}\label{dim1}
For $n>1$ a very general $\tau$ in the upper half plane, the K3 surface which is the compactification of 
$$
y^2+z+z^{-1} + x^3 -\frac{j(\tau)^{\frac 13}j(-\frac 1{n\tau})^{\frac 13} }{48}\,x  -\frac{ (j(\tau)-1728)^{\frac 12}(j(-\frac 1{n\tau})-1728)^{\frac 12}}{864} = 0
$$
has Picard lattice isomorphic to  $E_8(-1)\oplus E_8(-1)\oplus U\oplus \la -2n\ra$.
\end{theorem}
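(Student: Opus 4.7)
The plan is to combine Theorem \ref{main} with a direct computation of the transcendental lattice of $A := E_\tau \times E_{-1/(n\tau)}$. By Theorem \ref{main}, the surface $X = X(\tau)$ and $A$ form a Shioda-Inose pair, so there is a Hodge isometry $T(X) \cong T(A)$. Once the rank and discriminant of $T(A)$ are in hand, I can transfer them to $\Pic(X)$ using the unimodularity of the K3 lattice, and then identify $\Pic(X)$ by invoking the sublattice produced in Section \ref{toricsection}.

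First I would compute $\NS(A)$. The Fricke involution $\tau \mapsto -1/(n\tau)$ is realized by a cyclic isogeny $\phi : E_\tau \to E_{-1/(n\tau)}$ of degree $n$. For $\tau$ outside the countable union of CM points and special loci of $X_0(n)^+$, each factor of $A$ has endomorphism ring $\Z$ and $\Hom(E_\tau, E_{-1/(n\tau)}) = \Z \cdot \phi$. Hence $\NS(A)$ has rank $3$, spanned by the fiber classes $f_1 = [E_\tau \times \{0\}]$ and $f_2 = [\{0\} \times E_{-1/(n\tau)}]$ together with the graph $\Gamma_\phi$. Using $\Gamma_\phi \cdot f_1 = \deg \phi = n$, $\Gamma_\phi \cdot f_2 = 1$, and $\Gamma_\phi^2 = 0$ (by adjunction, since $\Gamma_\phi$ is an elliptic curve in an abelian surface), the Gram matrix is
$$
\begin{pmatrix} 0 & 1 & n \\ 1 & 0 & 1 \\ n & 1 & 0 \end{pmatrix},
$$
with determinant $2n$. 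Since $H^2(A, \Z)$ is unimodular and $\NS(A)$ is primitively embedded, $T(A)$ is a rank-$3$ lattice of signature $(2,1)$ and discriminant $2n$.

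Next I transfer this to $X$. By Theorem \ref{main}, $T(X) \cong T(A)$, so $T(X)$ is also rank $3$ with discriminant $2n$; the unimodularity of the K3 lattice combined with the primitivity of $\Pic(X)$ then yields $\rk \Pic(X) = 19$ and $|\det \Pic(X)| = 2n$. From Section \ref{toricsection}, the unimodular lattice $L := E_8(-1) \oplus E_8(-1) \oplus U$ sits primitively inside $\Pic(X)$. Because $L$ is unimodular, it is an orthogonal direct summand of every lattice containing it, so $\Pic(X) = L \oplus L^\perp$ with $L^\perp$ taken inside $\Pic(X)$. By the signature count $L^\perp$ is a negative-definite rank-$1$ lattice, and the discriminant calculation forces $L^\perp \cong \la -2n \ra$, which completes the identification.

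The step most likely to cause trouble is the verification that $\NS(A)$ has rank exactly $3$ with the stated Gram matrix for very general $\tau$. One must avoid CM points, where the endomorphism ring of $E_\tau$ enlarges and can produce additional Néron-Severi classes, and one must confirm that $\phi$ is indivisible in $\Hom(E_\tau, E_{-1/(n\tau)})$. The latter follows from the fact that $\ker \phi$ is cyclic of order $n$: were $\phi = k \psi$ for some integer $k > 1$, then $\ker \phi$ would contain $E_\tau[k] \cong (\Z/k)^2$, contradicting cyclicity. The hypothesis $n > 1$ appears precisely because for $n = 1$ one has $E_\tau \cong E_{-1/\tau}$ and the displayed equation acquires an additional singular point, so the generic surface of the family would need to be replaced by its resolution before Theorem \ref{main} applies directly.
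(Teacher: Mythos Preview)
Your proof is correct and follows the same overall strategy as the paper: both arguments invoke Theorem~\ref{main} to obtain the Shioda--Inose pair and hence the Hodge isometry $T(X)\cong T(A)$, and then read off $\Pic(X)$. The only difference is that the paper simply cites \cite{Dolgachev} for the transcendental lattice, whereas you compute $\NS(A)$ directly via the Gram matrix on $f_1,f_2,\Gamma_\phi$ and deduce $|\det T(A)|=2n$; this makes your version more self-contained. Your final step---splitting off the unimodular summand $E_8(-1)^{\oplus 2}\oplus U$ inside $\Pic(X)$ to isolate a rank-one negative-definite orthogonal complement, necessarily $\la -2n\ra$---is also a bit crisper than the paper's post-proof discussion, which instead constructs an explicit extra divisor $R_X$ of square $-8n$ coming from the isogeny fibration on $Y$ and then argues about index.
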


\begin{proof}
This statement follows from the fact that $E_1\times E_2$ and $X$ form Shioda-Inose pair for generic $\tau$, since our 
explicit isomorphism still applies, and the calculation of the transcendental lattice in \cite{Dolgachev}. 

\medskip
The only subtlety is that in $n=1$ case the surface $X$ is singular, see Remark \ref{n1}. Then the correct Shioda-Inose partner is the resolution of the $A_1$ singular point of  
$X$, which yields the Picard group $E_8(-1)\oplus E_8(-1)\oplus U\oplus \la -2\ra$ as expected.
\end{proof}

We can see the Picard lattice of $X$ from Theorem \ref{dim1} more explicitly as follows.
The defining feature of $E_{\tau} \times E_{-\frac 1{n\tau}}$ is that the corresponding curves 
have $n:1$ isogenies to each other. To describe one of them we will view $E_\tau$ and $E_{-\frac 1{n\tau}}$ as quotients
of $\Cc$ by $\Zz + \Zz\tau$ and $\Zz+\Zz(-\frac 1{n\tau})$ respectively and observe that 
there is a group homomorphism 
$\rho: E_{\tau} \to E_{-\frac 1{n\tau}}$ given by 
$$
z\mod (\Zz + \Zz\tau)\mapsto \frac z\tau \mod (\Zz+\Zz(-\frac 1{n\tau})). 
$$
This group homomorphism gives an elliptic fibration 
$$\rho: E_{\tau} \times E_{-\frac 1{n\tau}} \to E_{-\frac 1{n\tau}},~~
(z_1,z_2)\mapsto z_2+\rho(z_1)$$ 
which then gives rise to an elliptic fibration
$$
\rho_Y:Y \to \Pp^1 \cong E_{-\frac 1{n\tau}}/(-).
$$
The general fiber of $\rho_Y$ is a divisor $R_Y$ on $Y$ which intersects $G_{i,j}$ trivially and satisfies
$$
R_Y F_1 = 2n,~R_YF_2 = 2.
$$
We observe that $(R_Y- n F_2 - F_1) $ is orthogonal to all of the $F_{i,j}$ and $G_{i,j}$ divisors on $Y$ and
$$
(R_Y- n F_2 - F_1)^2 = -4n -4n + 4n =-4n.
$$

\medskip
Recall that by Remark \ref{YtoX} the K3 surface $X$ is obtained from a double cover of $Y$ at eight divisors 
that are linear combinations of $F_{i,j}$ and $G_{i,j}$ and then contraction of eight $(-1)$ curves. We can thus
view $\Pic(X)$ as an orthogonal complement in the double cover to the $8$ exceptional divisors.
The pullback of $(R_Y-nF_2-F_1)$ to the double cover and then orthogonal projection to an element $R_X\in \Pic(X)$ 
will be orthogonal to the rank $18$ sublattice $E_8(-1)\oplus E_8(-1)\oplus U$ described in Section \ref{toricsection} and will have self-intersection 
\begin{equation}\label{8n}
R_X^2 = -8n.
\end{equation} 
Note however that $R_X$ and the aforementioned sublattice do not generate the whole Picard group of $X$. Namely, there are four reducible fibers of $\rho_Y$ which correspond to points of order two on $E_{-\frac 1{n\tau}}$. They give rise to elements of Picard group of $Y$
which are half of $R$ modulo the lattice generated by $F$ and $G$ divisors. By pullback and projection, we get 
elements of $\Pic(X)$.
Since $E_8(-1)\oplus E_8(-1)\oplus U$ is unimodular, and Picard of $X$ is generically $19$, the lattice $\Pic(X)$ is 
equal to $E_8(-1)\oplus E_8(-1)\oplus U$ plus its orthogonal complement which is generated by a single element. We know
that it will have self intersection $(-2n)$ by \eqref{8n}.

\medskip
\begin{remark}
It would be interesting to connect  our Theorem \ref{dim1} to the work of Dolgachev \cite{Dolgachev} for small values of $n$, using the known formulas for classical modular polynomials \cite{Silverman} which are polynomial equations that vanish on $(j(\tau),j(-\frac 1{n\tau}))$. However, these classical modular polynomials have rapidly growing complexity. The resulting K3 surfaces has very rich geometric structure with multiple elliptic fibrations, which would make explicit comparisons rather complicated.
\end{remark}

\begin{remark}
We will also briefly comment that Theorem \ref{main} can be used in a uniform way to provide many example of K3 surfaces of Picard rank $20$ by looking at pairs of isogeneous curves with complex multiplication, although we are unaware of any potential applications.
\end{remark}

\end{document}